\newtheorem{Theorem}{Theorem}[section]
\newtheorem{Corollary}[Theorem]{Corollary}
\theoremstyle{definition}
\newtheorem{Definition}[Theorem]{Definition}
\newtheorem{Remark}[Theorem]{Remark}
\numberwithin{equation}{section}
\newcommand\thref[1]{Theorem \ref{#1}}
\newcommand\coref[1]{Corollary \ref{#1}}
\newcommand\deref[1]{Definition \ref{#1}}
\newcommand\reref[1]{Remark \ref{#1}}
\newcommand\seref[1]{Section \ref{#1}}
\newcommand{\Nset}{\mathbb N}
\newcommand{\Cset}{\mathbb C}
\newcommand{\Rset}{\mathbb R}
\newcommand{\Mset}{{\mathbb M}_d}
\newcommand{\de}{\delta}
\newcommand{\be}{\beta}
\newcommand{\diag}{\mathrm{diag}}
\newcommand{\Id}{\mathrm{Id}}
\newcommand{\ct}{\tilde{c}}
\newcommand{\Ct}{\tilde{C}}
\newcommand{\fMd}{\mathfrak{M}_d}
\newcommand{\fb}{\mathfrak{b}}
\newcommand{\fm}{\mathfrak{m}}
\newcommand{\fmt}{\tilde{\mathfrak{m}}}
\newcommand{\Gxz}{G(x,z;\fm,\be)}
\newcommand{\Gxw}{G(x,w;\fm,\be)}
\newcommand{\Pnx}{P_n(x;\fm,\be)}
\newcommand{\Pmx}{P_m(x;\fm,\be)}
\newcommand{\Ex}[1]{E_{x_{#1}}}
\newcommand{\En}[1]{E_{n_{#1}}}
\newcommand{\cLx}[1]{\mathcal{L}^{x}_{#1}}
\newcommand{\cLn}[1]{\mathcal{L}^{n}_{#1}}
\begin{document}

\title[Meixner polynomials in several variables]{Meixner polynomials in several variables satisfying bispectral difference equations}

\author[P.~Iliev]{Plamen~Iliev}

\address{School of Mathematics, Georgia Institute of Technology,
Atlanta, GA 30332--0160, USA}
\email{iliev@math.gatech.edu}
\thanks{The author was supported in part by NSF grant DMS-0901092.}

\date{February 14, 2012}

\maketitle

\begin{abstract}
We construct a set $\fMd$ whose points parametrize families of Meixner polynomials in $d$ variables. There is a natural bispectral involution $\fb$ on $\fMd$ which corresponds to a symmetry between the variables and the degree indices of the polynomials. We define two sets of $d$ commuting partial difference operators diagonalized by the polynomials. One of the sets consists of difference operators acting on the variables of the polynomials and the other one on their degree indices, thus proving their bispectrality. The two sets of partial difference operators are naturally connected via the involution $\fb$.
\end{abstract}

%%%%%%%%%%%%%%%%%%%%%%%%%%%%%%%%%%%%%%%%%% Section 1
\section{Introduction}\label{se1}
%%%%%%%%%%%%%%%%%%%%%%%%%%%%%%%%%%%%%%%%%%

One of the reasons why the classical orthogonal polynomials $p_n(x)$ of a discrete variable $x$ appear in numerous applications is the fact they are eigenfunctions of a second-order difference operator acting on $x$. A detailed account of their characteristic properties and application can be found in \cite{NSU}.

The last few years there have been advances in different aspects of the classification and the construction of orthogonal polynomials in several variables which are eigenfunctions of partial difference operators, see for instance \cite{GI,GR1,I1,I2,IT,IX}. The difficulty in the multivariate case is that the polynomials are no longer uniquely determined by the orthogonality measure (up to a multiplicative constant), and therefore their spectral properties depend on the way we apply the Gram-Schmidt process, see \cite{DX} for the general theory.

Following the pioneering work of Duistermaat and Gr\"unbaum \cite{DG}, we say that a family $\{P_{n_1,\dots,n_d}(x_1,\dots,x_d)\}$ of polynomials in $d$ variables $x_1,\dots,x_d$ with degree indices $n_1,\dots,n_d$ solves a discrete-discrete bispectral problem, if there exist commuting partial difference operators $\cLx{1},\dots,\cLx{d}$ acting on the variables $x_1,\dots,x_d$ with coefficients independent of $n_1,\dots,n_d$ and commuting partial difference operators $\cLn{1},\dots,\cLn{d}$ acting on the degree indices $n_1,\dots,n_d$ with coefficients independent of $x_1,\dots,x_d$ such that 
\begin{subequations} \label{1.1}
\begin{align}
&\cLx{j}P_{n_1,\dots,n_d}(x_1,\dots,x_d)=\lambda_j(n_1,\dots,n_d) P_{n_1,\dots,n_d}(x_1,\dots,x_d) \label{1.1a}\\
&\cLn{j}P_{n_1,\dots,n_d}(x_1,\dots,x_d)=\mu_j(x_1,\dots,x_d)P_{n_1,\dots,n_d}(x_1,\dots,x_d), \label{1.1b}
\end{align}
\end{subequations}
for $j=1,2,\dots,d$, where $\lambda_j(n_1,\dots,n_d)$ are independent of $x_1,\dots,x_d$ and $\mu_j(x_1,\dots,x_d)$ are independent of $n_1,\dots,n_d$. Within the context of orthogonal polynomials one is often interested in the commuting operators corresponding to multiplications by the independent variables $x_j$ and therefore $\mu_j(x_1,\dots,x_d)=x_j$ is one natural choice for the eigenvalues in \eqref{1.1b}. On the other hand, even in the one-variable case, the eigenvalues in \eqref{1.1a} can be quadratic functions (e.g. in the case of Hahn polynomials) and in higher dimensions, they can depend on some or all of the chosen degree indices $n_1,\dots,n_d$, see for instance the families discussed in \cite{GI,I1}.

Recently \cite{I2} we proved that all families of multivariate 
Krawtchouk polynomials defined by Griffiths \cite{G} are bispectral. We used a Lie-theoretic approach which naturally led to the construction of the partial difference operators in equations \eqref{1.1}. Moreover, we showed how other known families of bispectral multivariate Krawtchouk polynomials fit within the above theory. The construction was a natural extension of an earlier joint work with Terwilliger \cite{IT} for the bivariate polynomials defined in \cite{HR}.  We note that different proofs of the orthogonality properties of the multivariate Krawtchouk polynomials as well as interesting probabilistic applications and connections to Gelfand pairs and character algebras were discussed in the papers \cite{GR2,M1,M2,MT}.

In the present paper we give a similar parametrization of bispectral multivariate Meixner polynomials and the corresponding partial difference operators. The main result concerning the bispectrality of the multivariate Meixner polynomials constructed here is analogous to one in the Krawtchouk case \cite{I2}, but the proof is different.

The paper is organized as follows. In the next section, we define a set $\fMd$ and for every $\fm\in\fMd$ and $\be\in\Cset\setminus(-\Nset_0)$ we construct multivariate Meixner polynomials $\{P_{n_1,\dots,n_d}(x_1,\dots,x_d;\fm,\be):n_j\in\Nset_0\}$, 
mutually orthogonal on $\Nset_0^d$ with respect to a weight function of the form
$$W(x)=(\be)_{\sum_{j=1}^{d}x_j}\prod_{j=1}^{d}\frac{c_j^{x_j}}{x_j!}.$$ 
The construction is based on a generating function and is similar to one used by Griffiths \cite{G} in the case of the Krawtchouk 
polynomials. In \seref{se3} we give a hypergeometric representation for the multivariate Meixner polynomials which yields the duality between the variables and the degree indices of the polynomials. The key ingredient is to ``expand'' the generating function appropriately and to regroup the terms by changing the summation indices. The computation is essentially the same as the one in the Krawtchouk case which can found in the work of Mizukawa and Tanaka \cite{MT}, except that the expansion of the generating series now is infinite. In \seref{se4} we construct the commuting partial difference operators $\{\cLx{j}\}_{j=1}^{d}$, $\{\cLn{j}\}_{j=1}^{d}$  and we prove that they are diagonalized by the polynomials  $\{P_{n_1,\dots,n_d}(x_1,\dots,x_d;\fm,\be):n_j\in\Nset_0\}$ by using the generating function and the duality. Finally, we list two explicit families of Meixner polynomials in arbitrary dimension in \seref{se5}. In the first example, we explain how we need to pick the point $\fm\in\fMd$ in order to obtain the multivariate Meixner polynomials defined in \cite{IX}. In the second example, we construct a point $\fm\in\fMd$ which depends on one free parameter.  The corresponding Meixner polynomials can be thought of as analogs of the the multivariate Krawtchouk polynomials used in \cite{DS}.

%%%%%%%%%%%%%%%%%%%%%%%%%%%%%%%%%%%%%%%%%% Section 2
\section{Multivariate Meixner polynomials}\label{se2}
%%%%%%%%%%%%%%%%%%%%%%%%%%%%%%%%%%%%%%%%%%

\subsection{Basic notations and definitions}\label{ss2.1}
Throughout the paper we shall use standard multi-index notations. For instance, 
if $x=(x_1,x_2,\dots,x_d)\in\Cset^d$ and $n=(n_1,n_2,\dots,n_d)\in\Nset_0^d$ then 
\begin{equation*}
x^n=x_1^{n_1}x_2^{n_2}\cdots x_d^{n_d},\qquad n!=n_1!n_2!\cdots n_d!,
\end{equation*}
and $|x|=x_1+x_2+\cdots+x_d$.

First we introduce the set $\fMd$ whose points parametrize families of multivariate Meixner polynomials. In most applications, the parameters are real, but since the partial difference equations studied here have natural extensions for complex numbers, we work below with $\Cset$ rather than $\Rset$.

\begin{Definition}\label{de2.1} 
Let  $\fMd$ denote the set of 4-tuples $(c_0,C,\Ct,U)$,
where $c_0$ is a nonzero complex number 
and $C,\Ct, U$ are $(d+1)\times (d+1)$ matrices with complex entries
satisfying the following conditions:
\begin{itemize}
\item[{(i)}] $C=\diag(1,-c_1,-c_2,\dots,-c_{d})$ and 
$\Ct=\diag(1,-\ct_1,-\ct_2\dots,-\ct_{d})$ are diagonal;
\item[{(ii)}] $U=(u_{i,j})_{0\leq i,j\leq d}$ is such that 
$u_{0,j}=u_{j,0}=1$ for all $j=0,1,\dots,d$, i.e. 
\begin{equation}\label{2.1}
U=\left(\begin{matrix}1 & 1 & 1 & \dots &1\\ 
1 & u_{1,1} &u_{1,2} &\dots & u_{1,d}\\
 \vdots & \\
 1 & u_{d,1} & u_{d,2}  &\dots &u_{d,d}
\end{matrix}\right);
\end{equation}
\item[{(iii)}]  The following matrix equation holds
\begin{equation}\label{2.2}
U^{t}CU\Ct =c_0I_{d+1}, 
\end{equation}
where  $I_{d+1}$ denotes the identity $(d+1)\times (d+1)$ matrix.
\end{itemize}
\end{Definition}
Note that $c_0$ is a nonzero number and therefore \eqref{2.2} implies that the matrices $C$, $\Ct$ and $U$ in \deref{de2.1} are invertible. We shall denote by $c$ and $\ct$ the $d$-dimensional vectors 
$$c=(c_1,c_2,\dots,c_d), \qquad \ct=(\ct_1,\ct_2,\dots,\ct_d).$$ 

Comparing the $(i,j)$ entries on both sides of equation \eqref{2.2} for $j=0$ and $j>0$ we obtain the following identities:
\begin{subequations}\label{2.3}
\begin{align}
&\sum_{k=1}^{d}c_ku_{k,i}=1-c_0\de_{i,0}\label{2.3a}\\
&\sum_{k=1}^{d}c_ku_{k,i}u_{k,j}=1+\frac{c_0}{\ct_j}\de_{i,j}.\label{2.3b}
\end{align}
\end{subequations}

\begin{Remark}\label{re2.2}
We shall use later the involution $\fb$ on $\fMd$ defined by 
\begin{equation}\label{2.4}
\fb:\fm=(c_0,C,\Ct,U)\rightarrow \fmt=(c_0,\Ct,C,U^t).
\end{equation}
In particular, using \eqref{2.3a} with $i=0$ and $\fb$ we see that 
\begin{equation}\label{2.5}
c_0=1-|c|= 1-|\ct|.
\end{equation}
\end{Remark}

\begin{Remark}
If we start with nonzero numbers $c_1,c_2,\dots,c_d$ such that $c_0=1-|c|\neq 0$ we can define a point $\fm=(c_0,C,\Ct,U)\in\fMd$ as follows. First, we construct a basis $v_0,v_1,\dots,v_d$ of column vectors in $\Cset^{d+1}$ mutually orthogonal with respect to the bilinear form $\langle w_1,w_2\rangle=w_1^t C w_2$ such that $v_0=(1,1,\dots,1)^t$, the $0$-th coordinate of $v_j$ is $1$  and $\langle v_j, v_j\rangle\neq0$ for $j=1,2,\dots,d$. Then the matrix $U$ with columns $v_0,v_1,\dots,v_d$ is of the form given in \eqref{2.1} and the diagonal matrix $\Ct$ is uniquely determined by \eqref{2.2}. 
If $d=1$, then $v_1=(1,1/c_1)^t$ is uniquely determined from $c_1$. However, when $d>1$ we have $d(d-1)/2$ degrees of freedom in choosing $U$.
\end{Remark}

In the rest of the paper we use $\{z_j\}_{j=1}^{d}$ and $\{w_j\}_{j=1}^{d}$ to denote formal complex variables which are sufficiently close to $0$. In all exponents below we fix the principal branch of the logarithmic function.

For every $\fm\in\fMd$ and $\be\in\Cset$ we consider the function of $x=(x_1,\dots,x_d)$, 
$z=(z_1,\dots,z_d)$ 
\begin{equation}\label{2.6}
\Gxz=\left(1-|z|\right)^{-\be-|x|}\prod_{i=1}^{d}
\left(1-\sum_{j=1}^{d}u_{i,j}z_j\right)^{x_i}.
\end{equation}
\begin{Definition}\label{de2.3}
For $\fm\in\fMd$ and $\be\in\Cset\setminus(-\Nset_0)$ we define multivariate 
Meixner polynomials $\{\Pnx:n\in\Nset_0^d\}$ by expanding  $\Gxz$ in a neighborhood of $z=0$ as follows
\begin{equation}\label{2.7}
\Gxz=\sum_{n\in\Nset_0^d}\frac{(\be)_{|n|}}{n!}\Pnx z^n.
\end{equation}
\end{Definition}

\subsection{Orthogonality relations}\label{ss2.2}
We show next that the polynomials $\{\Pnx:n\in\Nset_0^d\}$ defined above are mutually orthogonal with respect to the weight $(\be)_{|x|}c^x/x!$ on $\Nset_0^d$. More precisely, using the notations introduced so far, the following theorem holds.
\begin{Theorem}\label{th2.4}
Suppose that $|c_1|+|c_2|+\cdots+|c_d|<1$. Then for $n,m\in\Nset_0^d$ we have 
\begin{equation}\label{2.8}
\sum_{x\in\Nset_0^d}\Pnx \Pmx \frac{(\be)_{|x|}}{x!}c^x
=\frac{c_0^{-\be}n!}{(\be)_{|n|}\,\ct^{n}}\de_{n,m}.
\end{equation}
\end{Theorem}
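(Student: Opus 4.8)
The plan is to use the standard bilinear-generating-function technique for orthogonal polynomials. Introduce a second set of formal variables $w=(w_1,\dots,w_d)$ and form the sum
\begin{equation*}
S(z,w)=\sum_{x\in\Nset_0^d}\Gxz\,\Gxw\,\frac{(\be)_{|x|}}{x!}c^x.
\end{equation*}
On the one hand, substituting the expansion \eqref{2.7} into each factor and interchanging the order of summation (justified below) gives
\begin{equation*}
S(z,w)=\sum_{n,m\in\Nset_0^d}\frac{(\be)_{|n|}(\be)_{|m|}}{n!\,m!}\,z^nw^m
\left(\sum_{x\in\Nset_0^d}\Pnx\,\Pmx\,\frac{(\be)_{|x|}}{x!}c^x\right).
\end{equation*}
Thus the left-hand side of \eqref{2.8} is, up to the explicit prefactor $(\be)_{|n|}(\be)_{|m|}/(n!\,m!)$, the coefficient of $z^nw^m$ in $S(z,w)$, and it suffices to evaluate $S(z,w)$ in closed form and read off its Taylor coefficients.

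First I would compute $S(z,w)$ directly from \eqref{2.6}. Pulling out the factors independent of $x$ and collecting the remaining ones into a single product over $i$, one obtains
\begin{equation*}
S(z,w)=(1-|z|)^{-\be}(1-|w|)^{-\be}\sum_{x\in\Nset_0^d}\frac{(\be)_{|x|}}{x!}\,t^x,
\qquad
t_i=\frac{c_i\bigl(1-\sum_j u_{i,j}z_j\bigr)\bigl(1-\sum_j u_{i,j}w_j\bigr)}{(1-|z|)(1-|w|)}.
\end{equation*}
The inner sum is evaluated by the multivariate binomial series $\sum_{x\in\Nset_0^d}\frac{(\be)_{|x|}}{x!}t^x=(1-|t|)^{-\be}$, which converges because for $z,w$ near $0$ the quantities $t_i$ are close to $c_i$ and $\sum_i|c_i|<1$ by hypothesis; the same smallness makes the double series above absolutely convergent and legitimizes the interchange of summations. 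Hence $S(z,w)=\bigl[(1-|z|)(1-|w|)(1-|t|)\bigr]^{-\be}$.

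The crux of the argument, and the step I expect to be the main obstacle, is the simplification of $(1-|z|)(1-|w|)(1-|t|)$ using the defining relations of $\fMd$. Writing $(1-|z|)(1-|w|)(1-|t|)=(1-|z|)(1-|w|)-\sum_i c_i\bigl(1-\sum_j u_{i,j}z_j\bigr)\bigl(1-\sum_j u_{i,j}w_j\bigr)$ and expanding the product over $i$, the constant term is handled by $\sum_i c_i=1-c_0$ (that is, \eqref{2.3a} with second index $0$ and $u_{i,0}=1$), the terms linear in $z$ or $w$ by $\sum_i c_iu_{i,j}=1$ for $j\geq1$ (again \eqref{2.3a}), and the bilinear terms $\sum_i c_iu_{i,j}u_{i,k}$ by \eqref{2.3b}. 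The ``$1$'' parts of these identities reassemble into $(1-|z|)(1-|w|)$ and cancel, while only the diagonal part $c_0/\ct_j$ coming from \eqref{2.3b} survives, leaving
\begin{equation*}
(1-|z|)(1-|w|)(1-|t|)=c_0\left(1-\sum_{j=1}^{d}\frac{z_jw_j}{\ct_j}\right),
\end{equation*}
so that $S(z,w)=c_0^{-\be}\bigl(1-\sum_j z_jw_j/\ct_j\bigr)^{-\be}$. Expanding this once more by the binomial series yields $S(z,w)=c_0^{-\be}\sum_{n\in\Nset_0^d}\frac{(\be)_{|n|}}{n!\,\ct^n}z^nw^n$, a series supported only on the diagonal $m=n$. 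Comparing the coefficient of $z^nw^m$ with the first expression for $S(z,w)$ forces the factor $\de_{n,m}$ and, on the diagonal, gives $\sum_x\Pnx^2\,(\be)_{|x|}c^x/x!=c_0^{-\be}n!/\bigl((\be)_{|n|}\ct^n\bigr)$, which is exactly \eqref{2.8}.
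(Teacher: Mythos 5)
Your proof is correct and follows essentially the same route as the paper: both evaluate the bilinear generating-function sum $\sum_{x}\Gxz\,\Gxw\,(\be)_{|x|}c^x/x!$ in closed form as $c_0^{-\be}\bigl(1-\sum_j z_jw_j/\ct_j\bigr)^{-\be}$ and then compare coefficients of $z^nw^m$. The only difference is cosmetic: you carry out the key cancellation entrywise via the identities \eqref{2.3a}--\eqref{2.3b}, whereas the paper does the same computation in matrix form, writing the bracket as $(UW)^{t}C(UZ)$ and invoking \eqref{2.2} directly (the identities \eqref{2.3} being precisely the component form of that matrix equation).
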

\begin{proof}
Using the formula
\begin{equation}\label{2.9}
\left(1-|z|\right)^{-\gamma}=\sum_{k\in\Nset_0^d} \frac{(\gamma)_{|k|}}{k!}z^k,
\end{equation}
we find 
\begin{equation}\label{2.10}
\begin{split}
&\sum_{x\in\Nset_0^d}\Gxz \Gxw \frac{(\be)_{|x|}}{x!}c^x\\
&\quad=\left[(1-|z|)(1-|w|)-\sum_{i=1}^{d}c_i\left(1-\sum_{j=1}^{d}u_{i,j}z_j\right)\left(1-\sum_{s=1}^{d}u_{i,s}w_s\right)\right]^{-\be}.
\end{split}
\end{equation}
If we set $Z=(-1,z_1,\dots,z_d)^{t}$ and $W=(-1,w_1,\dots,w_d)^{t}$, then the expression in the big parentheses on the right-hand side 
above is equal to $(UW)^{t}C(UZ)$. Using \eqref{2.2} we find
\begin{equation*}
(UW)^{t}C(UZ)=c_0W^{t}\Ct^{-1}Z=c_0\left[1-\sum_{j=1}^{d}\frac{z_jw_j}{\ct_j}\right].
\end{equation*}
Thus, \eqref{2.10} gives
$$\sum_{x\in\Nset_0^d}\Gxz \Gxw \frac{(\be)_{|x|}}{x!}c^x
=c_0^{-\be}\left[1-\sum_{j=1}^{d}\frac{z_jw_j}{\ct_j}\right]^{-\be}.$$
The proof now follows by expanding the right side of the last equation using \eqref{2.9} and comparing the coefficients of $z^nw^m$ on both sides of the resulting identity.
\end{proof}

%%%%%%%%%%%%%%%%%%%%%%%%%%%%%%%%%%%%%%%%%% Section 3
\section{Hypergeometric representation}\label{se3}
%%%%%%%%%%%%%%%%%%%%%%%%%%%%%%%%%%%%%%%%%%
We denote by $\Mset$ the set of all $d\times d$ matrices with entries in $\Nset_0$.

\begin{Theorem}\label{th3.1}
We have
\begin{equation}\label{3.1}
\Pnx=\sum_{A=(a_{i,j})\in\Mset}\frac{\prod_{j=1}^{d}(-n_j)_{\sum_{i=1}^{d}a_{i,j}}\prod_{i=1}^{d}(-x_i)_{\sum_{j=1}^{d}a_{i,j}}}{(\be)_{\sum_{i,j=1}^{d}a_{i,j}}}\prod_{i,j=1}^{d}\frac{(1-u_{i,j})^{a_{i,j}}}{a_{i,j}!}.
\end{equation}
\end{Theorem}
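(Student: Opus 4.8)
The plan is to read off $\Pnx$ directly from the generating function \eqref{2.6}--\eqref{2.7} by expanding $\Gxz$ as a power series in $z$ and isolating the coefficient of $z^n$. The decisive algebraic step is to rewrite each factor in the product over $i$ so that the constants $u_{i,j}$ get replaced by $1-u_{i,j}$ while the awkward $|x|$-dependent exponent disappears. Concretely, I would use the identity
\[
1-\sum_{j=1}^{d}u_{i,j}z_j=(1-|z|)\left(1+\frac{1}{1-|z|}\sum_{j=1}^{d}(1-u_{i,j})z_j\right),
\]
which follows at once from $\sum_j z_j=|z|$. Raising to the power $x_i$ and multiplying over $i=1,\dots,d$, the factors $(1-|z|)^{x_i}$ collect into $(1-|z|)^{|x|}$ and cancel the $|x|$ in the exponent of the first factor of \eqref{2.6}. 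This turns the generating function into
\[
\Gxz=(1-|z|)^{-\be}\prod_{i=1}^{d}\left(1+\frac{1}{1-|z|}\sum_{j=1}^{d}(1-u_{i,j})z_j\right)^{x_i},
\]
in which the only remaining $x$-dependence sits in the exponents $x_i$, and the constants entering are exactly the $1-u_{i,j}$ appearing in \thref{th3.1}.

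Next I would expand each of the $d$ factors by the generalized binomial theorem $(1+t)^{x_i}=\sum_{m\geq 0}\frac{(-1)^m(-x_i)_m}{m!}t^m$ and then apply the multinomial theorem to $\left(\sum_j(1-u_{i,j})z_j\right)^m$. Collecting the resulting exponents into a matrix $A=(a_{i,j})\in\Mset$, with $a_{i,j}$ the power of $z_j$ coming from the $i$-th factor, the product over $i$ becomes a sum over $A$ whose summand carries $\prod_i(-x_i)_{\sum_j a_{i,j}}$, the constants $\prod_{i,j}(1-u_{i,j})^{a_{i,j}}$, the factorials $\prod_{i,j}a_{i,j}!$, an overall sign $(-1)^{|A|}$ (where $|A|=\sum_{i,j}a_{i,j}$), and a monomial $\prod_j z_j^{\sum_i a_{i,j}}$ times the accumulated power $(1-|z|)^{-|A|}$. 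Absorbing the latter into the prefactor gives $(1-|z|)^{-\be-|A|}$, which I would expand by \eqref{2.9}. Reading off the coefficient of $z^n$ then forces the column sums $\sum_i a_{i,j}$ together with the summation index from \eqref{2.9} to reconstruct $n_j$, i.e. the \eqref{2.9}-index equals $n_j-\sum_i a_{i,j}$.

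Equating the coefficient of $z^n$ with $\frac{(\be)_{|n|}}{n!}\Pnx$ from \eqref{2.7}, three routine simplifications finish the proof. First, $\frac{n!}{\prod_j(n_j-\sum_i a_{i,j})!}=(-1)^{|A|}\prod_j(-n_j)_{\sum_i a_{i,j}}$ converts falling factorials into the Pochhammer symbols $(-n_j)_{\sum_i a_{i,j}}$ of \thref{th3.1} and produces a second sign $(-1)^{|A|}$ that cancels the one above. Second, the splitting $(\be)_{|n|}=(\be)_{|A|}(\be+|A|)_{|n|-|A|}$ collapses the ratio coming from \eqref{2.9} into the single denominator $(\be)_{|A|}$. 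Third, although the coefficient extraction a priori restricts the sum to $\sum_i a_{i,j}\le n_j$, any term violating this has $(-n_j)_{\sum_i a_{i,j}}=0$, so the sum may be taken over all of $\Mset$, exactly as stated. The delicate point --- and the only real difference from the Krawtchouk computation of \cite{MT} --- is that the binomial expansions are now \emph{infinite}, so one must justify the rearrangements as identities of formal power series in $z$ near $0$; the finiteness of the final sum, guaranteed by the vanishing Pochhammer factors which force $|A|\le|n|$, simultaneously shows that $\Pnx$ is a genuine polynomial and that these manipulations are legitimate.
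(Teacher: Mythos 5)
Your proof is correct and follows essentially the same route as the paper: the same factorization $1-\sum_j u_{i,j}z_j=(1-|z|)\bigl(1+\sum_j(1-u_{i,j})z_j/(1-|z|)\bigr)$, the same binomial/multinomial expansion into a sum over matrices $A\in\Mset$, the same re-expansion of $(1-|z|)^{-\be-|A|}$ via \eqref{2.9}, and the same coefficient extraction; your Pochhammer simplifications are exactly the content of the paper's identity \eqref{3.3} with the substitution $n_j=l_j+\sum_i a_{i,j}$. No gaps.
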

Note that $(-n_j)_{\sum_{i=1}^{d}a_{i,j}}=0$ when $\sum_{i=1}^{d}a_{i,j}>n_j$. Therefore the sum in \eqref{3.1} is finite, using only matrices $A$ with nonnegative integer entries satisfying $\sum_{i=1}^{d}a_{i,j}\leq n_j$ for $j=1,\dots,d$. 
\begin{proof}[Proof of \thref{th3.1}]
For every $i\in\{1,2,\dots, d\}$ we have
\begin{equation*}
1-\sum_{j=1}^{d}u_{i,j}z_j=(1-|z|)\left(1+\sum_{j=1}^{d}\frac{(1-u_{i,j})z_j}{1-|z|}\right).
\end{equation*}
Taking the $x_i$-th power of the above equation and expanding the second term using \eqref{2.9} we obtain
\begin{equation*}
\left(1-\sum_{j=1}^{d}u_{i,j}z_j\right)^{x_i}=(1-|z|)^{x_i}\sum_{k_i\in\Nset_0^d}\frac{(-1)^{|k_i|}(-x_i)_{|k_i|}}{k_i!}\frac{\eta_i^{k_i}z^{k_i}}{(1-|z|)^{|k_i|}},
\end{equation*}
where we have set $\eta_i=(1-u_{i,1},1-u_{i,2},\dots,1-u_{i,d})$. Substituting the last formula into the right-hand side of \eqref{2.6} we obtain
\begin{equation*}
\begin{split}
\Gxz=&\sum_{k_1,k_2,\dots,k_d\in\Nset_0^d}(1-|z|)^{-\be-\sum_{i=1}^{d}|k_i|}(-1)^{\sum_{i=1}^{d}|k_i|}\\
&\quad\times \left(\prod_{i=1}^{d}\frac{(-x_i)_{|k_i|}}{k_i!}\eta_i^{k_i}\right)z^{\sum_{i=1}^{d}k_i}.
\end{split}
\end{equation*}
Expanding also $(1-|z|)^{-\be-\sum_{i=1}^{d}|k_i|}$ we get
\begin{equation}\label{3.2}
\begin{split}
\Gxz=&\sum_{l,k_1,k_2,\dots,k_d\in\Nset_0^d}\frac{(-1)^{\sum_{i=1}^{d}|k_i|}\left(\be+\sum_{i=1}^{d}|k_i|\right)_{|l|}}{l!}\\
&\quad\times \left(\prod_{i=1}^{d}\frac{(-x_i)_{|k_i|}}{k_i!}\eta_i^{k_i}\right)z^{l+\sum_{i=1}^{d}k_i}.
\end{split}
\end{equation}
Let us denote by $a_{i,j}$ the entries of the vector $k_i$, and by $l_j$ the entries of $l$, i.e. $k_i=(a_{i,1},a_{i,2},\dots,a_{i,d})$ and 
$l=(l_1,l_2,\dots,l_d)$. Then 
\begin{equation}\label{3.3}
\frac{(-1)^{\sum_{i=1}^{d}|k_i|}\left(\be+\sum_{i=1}^{d}|k_i|\right)_{|l|}}{l!}
=\frac{(\be)_{|l|+\sum_{i,j=1}^{d}a_{i,j}}}{(\be)_{\sum_{i,j=1}^{d}a_{i,j}}}\prod_{j=1}^{d}\frac{\left(-l_j-\sum_{i=1}^{d}a_{i,j}\right)_{\sum_{i=1}^{d}a_{i,j}}}{\left(l_j+\sum_{i=1}^{d}a_{i,j}\right)!}.
\end{equation}
The proof now follows by plugging \eqref{3.3} in the first line of \eqref{3.2} and by replacing the sum over $l$ with a sum over $n=(n_1,n_2,\dots,n_d)$ where $n_j=l_j+\sum_{i=1}^{d}a_{i,j}$.
\end{proof}
As an immediate corollary of the hypergeometric representation \eqref{3.1} and the involution $\fb$ on $\fMd$ in \reref{re2.2} we see that the polynomials $\Pnx$ possess a duality between the variables $\{x_j\}$ and the indices $\{n_j\}$.
\begin{Corollary}\label{co3.2}
For $n,x\in\Nset_0^d$ we have
\begin{equation}\label{3.4}
\Pnx=P_x(n,\fmt,\be).
\end{equation} 
\end{Corollary}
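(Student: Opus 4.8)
The plan is to apply the hypergeometric representation \eqref{3.1} to each side of \eqref{3.4} and to recognize the two resulting finite sums as reindexings of one another via matrix transposition.

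First I would expand the right-hand side $P_x(n,\fmt,\be)$ using \thref{th3.1}. Since $\fb$ sends $\fm=(c_0,C,\Ct,U)$ to $\fmt=(c_0,\Ct,C,U^t)$ (see \reref{re2.2}), the matrix governing $\fmt$ is $U^t$, whose $(i,j)$ entry is $u_{j,i}$. Applying \eqref{3.1} with the variables $x$ and the degree indices $n$ interchanged therefore yields a sum over matrices $B=(b_{i,j})\in\Mset$ whose summand carries the Pochhammer factors $(-x_j)_{\sum_i b_{i,j}}$ and $(-n_i)_{\sum_j b_{i,j}}$, the weights $\prod_{i,j}(1-u_{j,i})^{b_{i,j}}/b_{i,j}!$, and the same denominator $(\be)_{\sum_{i,j}b_{i,j}}$.

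Next I would substitute $B=A^t$, that is $b_{i,j}=a_{j,i}$, which is a bijection of $\Mset$ onto itself. Under this change the column sum $\sum_i b_{i,j}$ becomes $\sum_i a_{j,i}$, the $j$-th row sum of $A$, while the row sum $\sum_j b_{i,j}$ becomes $\sum_j a_{j,i}$, the $i$-th column sum of $A$; the total degree $\sum_{i,j}b_{i,j}=\sum_{i,j}a_{i,j}$ is unchanged. Relabelling the dummy indices in the product of weights turns $\prod_{i,j}(1-u_{j,i})^{a_{j,i}}/a_{j,i}!$ into $\prod_{i,j}(1-u_{i,j})^{a_{i,j}}/a_{i,j}!$. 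After a further harmless renaming of the summation indices in the two Pochhammer products, the summand becomes term-by-term identical to the one in \eqref{3.1} defining $\Pnx$, which establishes \eqref{3.4}.

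The argument is purely combinatorial bookkeeping, and the only point demanding care is to verify that the three transpositions at play — the exchange $x\leftrightarrow n$ of variables and indices, the replacement of $U$ by $U^t$ built into $\fmt$, and the change of variable $B=A^t$ — compose to the identity. It is precisely the transpose in $\fmt$ that lets the weights $(1-u_{j,i})^{a_{j,i}}$ collapse back to the original $(1-u_{i,j})^{a_{i,j}}$, so that the duality \eqref{3.4} holds exactly and not merely up to a relabeling of the parameters.
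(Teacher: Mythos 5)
Your proposal is correct and takes essentially the same route as the paper, which derives \eqref{3.4} as an immediate consequence of the hypergeometric representation \eqref{3.1} together with the involution $\fb$ of \reref{re2.2}; your write-up simply makes explicit the transposition bookkeeping ($U\mapsto U^t$, $A\mapsto A^t$, $x\leftrightarrow n$) that the paper leaves to the reader.
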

%%%%%%%%%%%%%%%%%%%%%%%%%%%%%%%%%%%%%%%%%% Section 4
\section{Bispectrality}\label{se4}
%%%%%%%%%%%%%%%%%%%%%%%%%%%%%%%%%%%%%%%%%%
Let $\{e_1,e_2,\dots,e_d\}$ be the standard basis for $\Cset^{d}$. For every $i\in\{1,2,\dots,d\}$ we denote by $\Ex{i}$ and $\En{i}$ the customary shift operators acting on functions of $x=(x_1,x_2,\dots,x_d)$ and $n=(n_1,n_2,\dots,n_d)$, respectively, as follows
$$\Ex{i}f(x)=f(x+e_i) \quad\text{ and }\quad\En{i}g(n)=g(n+e_i).$$
For fixed $\fm\in\fMd$, $\be\in\Cset$ and $i\in\{1,2,\dots,d\}$ we define the following difference operators
\begin{subequations}\label{4.1}
\begin{align}
\cLx{i}&=\frac{\ct_i}{c_0}\sum_{1\leq k\neq l\leq d}c_ku_{k,i}u_{l,i}x_{l}\left(\Ex{k}\Ex{l}^{-1}-\Id\right)\nonumber\\
&\quad-\frac{\ct_i}{c_0}\sum_{l=1}^{d}u_{l,i}x_l\left(\Ex{l}^{-1}-\Id\right)-\frac{\ct_i}{c_0}\sum_{k=1}^{d}c_ku_{k,i}(\be+|x|)\left(\Ex{k}-\Id\right),\label{4.1a}
\intertext{and}
\cLn{i}&=\frac{c_i}{c_0}\sum_{1\leq k\neq l\leq d}\ct_ku_{i,k}u_{i,l}n_{l}\left(\En{k}\En{l}^{-1}-\Id\right)\nonumber\\
&\quad-\frac{c_i}{c_0}\sum_{l=1}^{d}u_{i,l}n_l\left(\En{l}^{-1}-\Id\right)-\frac{c_i}{c_0}\sum_{k=1}^{d}\ct_ku_{i,k}(\be+|n|)(\En{k}-\Id),\label{4.1b}
\end{align}
\end{subequations}
where $\Id$ denotes the identity operator. We show below that these operators are diagonalized by the polynomials $\Pnx$, thus providing solutions to the bispectral problem. Note that the backward shift operators $\En{l}^{-1}$ in \eqref{4.1b} are multiplied by $n_l$, and therefore these expressions will vanish when $n_l=0$. Thus $\cLn{i}$ is a well-defined operator acting on functions of $n\in\Nset_0^d$. Similarly, in view of the orthogonality \eqref{2.8}, it is natural to consider the polynomials for $x\in\Nset_0^d$ and the operators $\cLx{i}$ will involve evaluations of $\Pnx$ only at points $x$ with nonnegative integer coordinates.

\begin{Theorem}\label{th4.1}
For every $i\in\{1,2,\dots,d\}$ the following spectral equations hold
\begin{subequations}\label{4.2}
\begin{align}
\cLx{i}\Pnx&=n_i\Pnx,\label{4.2a} \\
\cLn{i}\Pnx&=x_i\Pnx.\label{4.2b}
\end{align}
\end{subequations}
\end{Theorem}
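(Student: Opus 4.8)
The plan is to prove only the first family of equations \eqref{4.2a} by a direct computation with the generating function, and then to deduce \eqref{4.2b} for free from the bispectral involution $\fb$ together with the duality of \coref{co3.2}. Indeed, comparing \eqref{4.1a} and \eqref{4.1b} one checks that the operator obtained from $\cLx{i}$ by applying $\fb$ (that is, by exchanging $c\leftrightarrow\ct$ and $u_{k,l}\leftrightarrow u_{l,k}$) and renaming the variable $x$ as $n$ is exactly $\cLn{i}$. Hence, writing $\cLx{i}[\fm]$ for the operator \eqref{4.1a} attached to the point $\fm$, we have $\cLx{i}[\fmt]\big|_{x\to n}=\cLn{i}[\fm]$. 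Applying \eqref{4.2a} at the point $\fmt$, and using $P_n(x;\fmt,\be)=P_x(n;\fm,\be)$ (which follows from \eqref{3.4} since $\fb$ is an involution), converts the statement into \eqref{4.2b}. So the whole theorem rests on \eqref{4.2a}.

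To establish \eqref{4.2a} I would use the expansion \eqref{2.7}. Since $\cLx{i}$ acts only on $x$, it commutes with the expansion in $z$, and comparing coefficients of $z^n$ shows that \eqref{4.2a} is equivalent to the single operator identity
\begin{equation*}
\cLx{i}\Gxz=z_i\frac{\partial}{\partial z_i}\Gxz,
\end{equation*}
because $z_i\partial_{z_i}z^n=n_iz^n$. The point is that each shift acts on $G$ by multiplication by an explicit rational function of $z$: writing $R_k=1-\sum_{j=1}^{d}u_{k,j}z_j$ for the base of the $k$-th factor in \eqref{2.6}, one has $\Ex{k}\Gxz=\frac{R_k}{1-|z|}\Gxz$ and $\Ex{l}^{-1}\Gxz=\frac{1-|z|}{R_l}\Gxz$, so that $\Ex{k}\Ex{l}^{-1}\Gxz=\frac{R_k}{R_l}\Gxz$. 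Dividing by $\Gxz$ therefore turns the desired identity into a purely algebraic identity in $z$, whose right-hand side is $z_i\partial_{z_i}\log\Gxz=\frac{(\be+|x|)z_i}{1-|z|}-\sum_{m=1}^{d}\frac{x_mu_{m,i}z_i}{R_m}$.

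Matching the two sides splits naturally into the terms proportional to $\be+|x|$ and those proportional to each $x_l$, and here is where the defining relation \eqref{2.2} enters through its consequences \eqref{2.3a} and \eqref{2.3b}. For the $(\be+|x|)$-part one uses $\sum_{k=1}^{d}c_ku_{k,i}(1-u_{k,j})=-\frac{c_0}{\ct_j}\de_{i,j}$ for $i\geq 1$, which is \eqref{2.3a} minus \eqref{2.3b}; this collapses the whole $(\be+|x|)$-contribution to $\frac{(\be+|x|)z_i}{1-|z|}$. For the $x_l$-part the key computation is $\sum_{k=1}^{d}c_ku_{k,i}R_k=1-|z|-\frac{c_0}{\ct_i}z_i$, again a direct consequence of \eqref{2.3a}--\eqref{2.3b}, after which the coefficient of each $x_l$ collapses to $-u_{l,i}z_i/R_l$, matching the corresponding term on the right. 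I expect the only real obstacle to be this bookkeeping: keeping the sum over $k\neq l$ straight against the sum over $k=1,\dots,d$ (the diagonal term $k=l$ in the double sum vanishes, so it may be freely included), and applying \eqref{2.3a}--\eqref{2.3b} with the correct indices. Once the two algebraic identities are verified, the operator identity holds, extracting the coefficient of $z^n$ yields \eqref{4.2a}, and \eqref{4.2b} then follows by the involution argument above.
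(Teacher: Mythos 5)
Your proposal is correct and follows essentially the same route as the paper: it establishes the operator identity $\cLx{i}\Gxz=z_i\frac{\partial}{\partial z_i}\Gxz$ by splitting $\cLx{i}$ into its $(\be+|x|)$-part and $x_l$-parts and collapsing each via the identities \eqref{2.3}, and then deduces \eqref{4.2b} from \eqref{4.2a} through the involution $\fb$ and the duality \eqref{3.4}. The only step the paper includes that you omit is minor: the duality argument yields \eqref{4.2b} only for $x\in\Nset_0^d$, and since both sides are polynomials in $x$ of total degree at most $|n|+1$, the identity extends to all $x\in\Cset^d$.
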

\begin{proof}
Fix $i\in\{1,2,\dots,d\}$ and let us denote 
\begin{equation*}
\cLx{i,l}=\Id-\Ex{l}^{-1}+\sum_{k\in\{1,\dots,d\}\setminus\{l\}}c_ku_{k,i}\left(\Ex{k}\Ex{l}^{-1}-\Id\right)\text{ for  every }l=1,2,\dots,d
\end{equation*}
and 
\begin{equation*}
\cLx{i,+}=-\frac{\ct_i}{c_0}\sum_{k=1}^{d}c_ku_{k,i}(\be+|x|)\left(\Ex{k}-\Id\right).
\end{equation*}
Then 
\begin{equation}\label{4.3}
\cLx{i}=\frac{\ct_i}{c_0}\sum_{l=1}^{d}u_{l,i}x_l\cLx{i,l}+\cLx{i,+}.
\end{equation}
Using the definition \eqref{2.6} of $\Gxz$ we find
\begin{align*}
&\frac{1}{\Gxz}\sum_{k\in\{1,\dots,d\}\setminus\{l\}}c_ku_{k,i}\left(\Ex{k}\Ex{l}^{-1}-\Id\right)\Gxz\\
&\quad=\frac{1}{\left(1-\sum_{j=1}^{d}u_{l,j}z_j\right)}\sum_{k\in\{1,\dots,d\}\setminus\{l\}}\,\sum_{j=1}^{d}(c_ku_{k,i}u_{l,j}-c_ku_{k,i}u_{k,j})z_j\\
\intertext{(interchanging the sums)}
&\quad= \frac{1}{\left(1-\sum_{j=1}^{d}u_{l,j}z_j\right)}\sum_{j=1}^{d}\sum_{k=1}^{d}(c_ku_{k,i}u_{l,j}-c_ku_{k,i}u_{k,j})z_j\\
\intertext{(using equations \eqref{2.3})}
&=\quad \frac{1}{\left(1-\sum_{j=1}^{d}u_{l,j}z_j\right)}\sum_{j=1}^{d}\left(u_{l,j}-\frac{c_0}{\ct_j}\de_{i,j}-1\right)z_j\\
&=\quad \frac{1}{\left(1-\sum_{j=1}^{d}u_{l,j}z_j\right)}\left(\sum_{j=1}^{d}(u_{l,j}-1)z_j-\frac{c_0}{\ct_i}z_i\right).
\end{align*}
From the last relation, the definition of $\cLx{i,l}$ 
and 
\begin{equation*}
\frac{1}{\Gxz}(\Id-\Ex{l}^{-1})\,\Gxz=\frac{\sum_{j=1}^{d}(1-u_{l,j})z_j}{1-\sum_{j=1}^{d}u_{l,j}z_j}
\end{equation*}
it follows that 
\begin{equation}\label{4.4}
\frac{1}{\Gxz}\cLx{i,l}\,\Gxz=-\frac{c_0z_i}{\ct_i\left(1-\sum_{j=1}^{d}u_{l,j}z_j\right)}.
\end{equation}
A similar computation shows that
\begin{equation}\label{4.5}
\frac{1}{\Gxz}\cLx{i,+}\,\Gxz=\frac{(\be+|x|)z_i}{1-|z|}.
\end{equation}
Using equations \eqref{4.3}, \eqref{4.4} and \eqref{4.5} we see that 
\begin{equation}\label{4.6}
\cLx{i}\Gxz=z_i\frac{\partial }{\partial z_i}\Gxz,
\end{equation}
which combined with \eqref{2.7} completes the proof of \eqref{4.2a}. The proof of \eqref{4.2b} follows from the duality established in \coref{co3.2}. 
Indeed, if $x\in\Nset_0^{d}$, then $\Pnx=P_x(n,\fmt,\be)$ and equation \eqref{4.2b} follows from \eqref{4.2a}. Moreover, for fixed $n\in\Nset_0^{d}$ both sides of \eqref{4.2b} are polynomials in $x$ of total degree at most $|n|+1$, hence if the equality holds for all $x\in\Nset_0^{d}$ it will be true for arbitrary $x\in\Cset^{d}$.
\end{proof}

%%%%%%%%%%%%%%%%%%%%%%%%%%%%%%%%%%%%%%%%%% Section 5
\section{Some examples}\label{se5}
%%%%%%%%%%%%%%%%%%%%%%%%%%%%%%%%%%%%%%%%%%
In this section we illustrate how specific points in $\fMd$ lead to families of multivariate Meixner polynomials or analogs of multivariate Krawtchouk polynomials which have already appeared in the literature in different applications.

\subsection{}\label{ss5.1}
Let us fix  nonzero complex numbers $c_1,\dots,c_{d}$ and let $c_0=1-|c|$. We define $\ct_1,\ct_2,\dots,\ct_{d}$ in terms of  $c_1,\dots,c_{d}$  as follows:
\begin{equation}\label{5.1}
\ct_{k}=\frac{c_kc_0}{(1-\sum_{j=k+1}^{d}c_j)(1-\sum_{j=k}^{d}c_j)}\text{ for }k=1,2,\dots,d.
\end{equation}
We shall assume that $\{c_j\}$ are such that the denominators in \eqref{5.1} do not vanish. Next we define a $(d+1)\times(d+1)$ matrix $U=(u_{i,j})$ with entries
\begin{subequations}\label{5.2}
\begin{align}
&u_{i,j}=\delta_{0,i},  &\text{ when }i<j,\label{5.2a}\\
&u_{i,j}=1,                 &\text{ when }i>j,\label{5.2b}\\
&u_{0,0}=1,\label{5.2c}\\
&u_{i,i}=-\frac{1-\sum_{k=i+1}^{d}c_k}{c_{i}}, &\text{ for }i=1,2,\dots,d.\label{5.2d}
\end{align}
\end{subequations}
Thus, $U$ is a matrix of the form \eqref{2.1} where the remaining entries are 0's and 1's above and below the diagonal respectively, and the diagonal entries are given in \eqref{5.2d}. One can check that with the above notations equation \eqref{2.2} holds and therefore we obtain a point $\fm\in\fMd$ which depends on the free parameters $c_1,\dots,c_d$. In particular, note that if $c_i\in(0,1)$ for all $i\in\{0,1,\dots,d\}$ then $\ct_i\in(0,1)$ for all $i\in\{1,\dots,d\}$.
Up to a permutation of the variables and the parameters, this choice leads to the multivariate Meixner polynomials defined in \cite{IX}.

\subsection{}\label{ss5.2}
Fix now $q\notin\{0, 1\}$ and define
\begin{equation}\label{5.3}
c_{k}=\ct_{k}=(1-q)q^{k-1}\text{ for }k=1,2,\dots,d.
\end{equation}
Then $c_0=1-|c|=q^{d}$. We define also a $(d+1)\times(d+1)$ matrix $U=(u_{i,j})_{0\leq i,j\leq d}$ with entries
\begin{subequations}\label{5.4}
\begin{align}
&u_{i,j}=1,  &\text{ when }i+j\leq d,\label{5.4a}\\
&u_{i,j}=\frac{1}{1-q},                 &\text{ when }i+j=d+1,\label{5.4b}\\
&u_{i,j}=0,&\text{ when }i+j>d+1.\label{5.4c}
\end{align}
\end{subequations}
With the above notations, we see that \eqref{2.2} holds leading to a point $\fm\in\fMd$ which depends on the free parameter $q$. Moreover, if $q\in(0,1)$ then $c_i=\ct_i\in(0,1)$ for all $i\in\{0,1,\dots,d\}$. These Meixner polynomials can be thought of as analogs of the multivariate Krawtchouk polynomials used in \cite{DS}.

%%%%%%%%%%%%%%%%%%%%%%%%%%%%%%%%%%%%%%%%%% 
\section*{Acknowledgments} The author would like to thank a referee for suggestions to improve an earlier version of this paper.
%%%%%%%%%%%%%%%%%%%%%%%%%%%%%%%%%%%%%%%%%%

\end{document}